\newtheorem{theorem}{Theorem}[section]
\newtheorem{conjecture}{Conjecture}[section]
\newtheorem{definition}{Definition}[section]
\newtheorem{example}{Example}[section]
\newtheorem{lemma}{Lemma}[section]
\newtheorem{corollary}{Corollary}
\font\smallit=cmti10
\begin{document}


\begin{center}
{\bf The $2$-adic valuation of the general degree 2 polynomial in 2 variables}
\vskip 20pt
{\bf Shubham}\\
{\smallit School of Physical Sciences,  Jawaharlal Nehru University,  New Delhi, 110067,  India}\\
{\tt shubham01nitw@gmail.com}
\end{center}
\vskip 30pt
\vskip 30pt

\centerline{\bf Abstract}

\noindent
  The $p$-adic \textit{valuation} of a polynomial can be given by its \textit{valuation tree}. This work describes the $2$-adic \textit{valuation tree} of the general degree 2 polynomial in 2 variables.
\pagestyle{myheadings}

\thispagestyle{empty} 
\baselineskip=15pt 
\vskip 30pt
\section{Introduction}
For $n \in \mathbb{N}$, the highest power of a prime $p$ that divides $n$ is called the $p$-adic valuation of the $n$.  This is denoted by $v_p(n)$.  Legendre establishes the following result about $p$-adic valuation of $n!$ in \cite{one} \begin{center}
     $v_p(n!) = \sum_{k=1}^{\infty} \lfloor \frac{n}{p^k}  \rfloor = \frac{n-s_p(n)}{p-1}$;
\end{center}where $s_p(n)$ is the sum of digits of $n$ in base $p$.  It is observed in \cite{two} that $2$-adic valuation of central binomial coefficient is   $s_2(n)$ i.e. \begin{center}
$v_2(C_n)$ = $s_2(n)$ where $C_n$ = $\binom{2n}{n}$.
\end{center}It follows from here that $C_n$ is always an even number and $C_n/2$ is odd when $n$ is a power of 2.  It is called a closed form in [2]. The definition of closed form depends on the context.  This has been discussed in[3, 4]. 
\par The work presented in \cite{two} forms part of a general project initiated by Victor H.  Moll et al to analyse the set \begin{center}
 $V_x = \{ v_p(x_n):n \in \mathbb{N}\}$;
\end{center}for a given sequence $x =\{x_n\}$.  \\ The $2$-adic valuation of $n^2-a$ is studied in \cite{two}. It is shown that $n^2-a, a\in \mathbb{Z}$ has a simple closed form when $a \not \equiv 4,7 \mod 8$.  For these two remaining cases the valuation is quite complicated.  It is studied by the notion of the \textit{valuation tree}. 

Given a polynomial $f(x)$ with integer coefficients, the sequence $\{v_2(f(n)): n \in \mathbb{N}\}$ is described by a tree. This is called the valuation tree attached to the polynomial $f$.  The vertices of this tree corresponds to some selected classes \begin{center}
$C_{m,j} = \{2^mi+j: i \in \mathbb{N}\}$,
\end{center}starting with the root node $C_{0,0} = \mathbb{N}$.  The procedure to select classes is explained below in the example (1.1).  Some notation for vertices of tree are introduced as follows: \begin{definition} A residue class $C_{m,j}$ is called \textit{terminal} if $v_2(f(2^mi+j))$ is independent of $i$.  Otherwise it is called \textit{non-terminal}.  The same terminology is given to vertices corresponding to the class $C_{m,j}$.  In the tree,  terminal vertices are labelled by their constant valuation and non-terminal vertices are labelled by a $*$. \end{definition}
\begin{example} Construction of valuation tree of $n^2 + 5$ is as follows:
note that $v_2(1+5)$ = 1 and $v_2(2+5$) is 0. So node $v_0$ is non terminating.  Hence it splits into two vertices and forms the first level.  These vertices correspond to the residue classes $C_{1,0}$ and $C_{1,1}$.  We can check that both these nodes are terminating with valuation 0 and 1. So the valuation tree of $n^2+5$ is given as follows:\end{example}
    \hspace*{3cm}   \begin{tikzpicture}
      [sibling distance=8em,level distance=3em,
      every node/.style={shape=circle,draw= blue,align=right}]
        \node{}
        child{node{0}
        } 
        child{node{1}}
              ;
        
      \end{tikzpicture} \par The main theorem of \cite{two} is as follows:\begin{theorem}
      Let $v$ be a non-terminating node at the $k$-th level for the valuation tree of $n^2 + 7$. Then $v$ splits into two vertices at the $(k + 1)$-level.  Exactly one of them terminates, with valuation $k$. The second one has valuation at least $k + 1$.
      \end{theorem}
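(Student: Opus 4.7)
The proof rests on a 2-adic factorization of $f(n) = n^2 + 7$. Since $-7 \equiv 1 \pmod{8}$, Hensel's lemma produces $\alpha \in \mathbb{Z}_2$ with $\alpha^2 = -7$; because $v_2(-7) = 0$, $\alpha$ is a unit in $\mathbb{Z}_2$, hence odd. Factoring $f(n) = (n - \alpha)(n + \alpha)$ in $\mathbb{Z}_2[n]$ gives
\[
v_2(f(n)) = v_2(n-\alpha) + v_2(n+\alpha).
\]
The crucial structural point is that $(n+\alpha) - (n-\alpha) = 2\alpha$ has valuation exactly $1$, so whenever one of $v_2(n \pm \alpha)$ exceeds $1$, the other is forced to equal $1$. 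This confines all the ``interesting'' 2-adic content of $f(n)$ to a single factor.

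My plan is to induct on $k$. A non-terminating class $C_{m, j}$ is characterized (up to a sign $\epsilon \in \{\pm 1\}$) by $v_2(j - \epsilon \alpha) \geq m$; writing $j - \epsilon \alpha = 2^m s$ with $s \in \mathbb{Z}_2$, for $n = 2^m i + j$ one obtains $n - \epsilon \alpha = 2^m(i + s)$ and $v_2(n + \epsilon \alpha) = 1$, so
\[
v_2(f(n)) = m + 1 + v_2(i + s).
\]
Splitting into the two children $C_{m+1, j}$ ($i$ even) and $C_{m+1, j + 2^m}$ ($i$ odd) separates the two parities of $i$. Exactly one of these choices makes $i + s$ odd, yielding a constant valuation $m + 1$, hence a terminal child; the other forces $i + s$ to be even, giving $v_2(f(n)) \geq m + 2$, which will be the non-terminal child.

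The main obstacle is confirming that this second child is genuinely non-terminal, rather than becoming a terminal class of some higher constant valuation. This is precisely the inductive hypothesis carried one step deeper: because $-7$ is not a square in $\mathbb{Q}$, the 2-adic expansion of $\alpha$ never terminates, so a refined approximation $j' \equiv \epsilon\alpha \pmod{2^{m+1}}$ always exists and feeds the next inductive step. A short base case treats the first non-terminal class (isolating the odd-$n$ residue $C_{1,1}$), after which the splitting computation above yields the claimed pattern; bookkeeping between the exponent $m$ appearing here and the level index $k$ in the statement (which differ only by the convention fixing the level of the root) delivers the asserted terminal valuation $k$ and non-terminal valuation $\geq k + 1$.
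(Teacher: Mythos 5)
First, note that the paper you are working from does not actually prove this statement: Theorem 1.1 is quoted from reference [2] (Byrnes et al.) as background, with no argument supplied, so your proposal can only be judged on its own merits. On those merits, your central device --- the factorization $n^2+7=(n-\alpha)(n+\alpha)$ over $\mathbb{Z}_2$ with $\alpha^2=-7$, $\alpha$ a unit, together with the observation that $v_2(2\alpha)=1$ pins one factor at valuation exactly $1$ once the other exceeds $1$ --- is the right mechanism, and it is consistent with the philosophy of Section 3 of this paper, where infinite branches correspond to roots in $\mathbb{Q}_p$. The inductive step you describe (writing $j-\epsilon\alpha=2^m s$, getting $v_2(f(n))=m+1+v_2(i+s)$, and splitting on the parity of $i$) is correct for $m\geq 2$ and does yield one terminal and one non-terminal child.

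The genuine gap is the base case, which you defer as "short" but which is exactly where the literal statement breaks. Your characterization of a non-terminal class by a single sign $\epsilon$ with $v_2(j-\epsilon\alpha)\geq m$, and the step $v_2(n+\epsilon\alpha)=1$, both require $m\geq 2$ (they use $v_2(n-\epsilon\alpha)\geq 2$ so that adding $2\alpha$ cannot cancel). At $m=1$ the class $C_{1,1}$ satisfies $v_2(j-\alpha)\geq 1$ and $v_2(j+\alpha)\geq 1$ simultaneously, and it splits into \emph{two} non-terminal children $C_{2,1}$ and $C_{2,3}$, one tracking each square root $\pm\alpha$: concretely $v_2(1^2+7)=3$, $v_2(5^2+7)=5$, $v_2(3^2+7)=4$, $v_2(7^2+7)=3$, so neither child is terminal and neither terminal child has valuation $1$. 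So "exactly one of them terminates, with valuation $k$" fails at $k=1$ under any reasonable level convention, and no bookkeeping shift of the index repairs it; you must either restate the theorem for $k\geq 2$ (handling levels $0$, $1$, $2$ explicitly) or adopt the convention of [2] that selects which classes appear as tree vertices. A secondary, non-fatal point: your appeal to "$-7$ is not a square in $\mathbb{Q}$" to establish non-terminality of the deeper child is a red herring --- the non-terminality follows immediately from applying your own formula $v_2(f(n))=(m+1)+1+v_2(i'+s')$ one level down, since $v_2(i'+s')$ is non-constant in $i'$ for any $s'\in\mathbb{Z}_2$.
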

     \par The $2$-adic valuation of the Stirling numbers is discussed in \cite{five}. The numbers $S(n,k)$ are the number of ways to partition a set of $n$ elements into exactly $k$ non-empty subsets where $n \in \mathbb{N}$ and $0\leq k \leq n$. These are explicitly given by \begin{center}
      $S(n,k) = \frac{1}{k!}\sum_{i=0}^{i=k-1}(-1)^{i}\binom{k}{i}(k-i)^{n}$;
\end{center}or, by the recurrence\begin{center}
$S(n,k) = S(n-1,k-1) + kS(n-1,k)$;
\end{center}with initial condition $S(0,0) = 1$ and $S(n,0) = 0 $ for $n>0$.  \\The $2$-adic valuation of $S(n,k)$ can be easily determined for $1\leq k \leq 4$ and closed form expression is given as follows:\begin{center}
$v_2(S(n,1))$ =0 = $v_2(S(n,2))$;\\
\end{center}
      \begin{center}
        \[
   v_2(S(n,3))= 
\begin{cases}
   1,& \text{if } n \hspace*{.2cm}  \text{is}\hspace*{.2cm} \text{even}\\
    0,              & \text{otherwise}
\end{cases}
\]
      \end{center}
      \begin{center}
        \[
   v_2(S(n,4))= 
\begin{cases}
   1,& \text{if } n \hspace*{.2cm} \text{is}\hspace*{.2cm} \text{odd}.\\
    0,              & \text{otherwise}
\end{cases}
\]
      \end{center}
      The important conjecture described there is that the partitions of $\mathbb{N}$ in classes of the form \begin{center} $C_{m,j}^{(k)} = \{2^mi+j:$ $i \in \mathbb{N}$ and starts at the point where $2^mi+j \geq k$ \} \end{center} leads to a clear pattern for $v_2(S(n,k))$ for $k$ $\in \mathbb{N}$ is fixed. We recall that the parameter $m$ is called the level of the class.   The main conjecture of \cite{five} is now stated:
      \begin{conjecture}Let $k \in\mathbb{N}$ be fixed.  Then we conjecture that \begin{itemize}
 \item[(a)] 
there exists a level $m_0(k)$ and an integer $\mu(k)$ such that for any $m\geq m_0(k)$,  the number of non-terminal classes of level $m$ is $\mu(k)$, independently of $m$;
\item[(b)] moreover, for each $m \geq m_0(k)$, each of the $\mu(k)$ non-terminal classes splits into one terminal and one non-terminal subclass. The latter generates the next level set.
   \end{itemize}
      \end{conjecture}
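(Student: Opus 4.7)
The plan is to attack the conjecture using the explicit formula $k!\,S(n,k)=\sum_{j=1}^{k}(-1)^{k-j}\binom{k}{j}j^{n}$ and to split the sum by the parity of $j$ as $T_k(n)+E_k(n)$, where $T_k$ collects the odd-$j$ terms and $E_k$ the even-$j$ terms. Each summand of $E_k(n)$ is divisible by $2^{n}$, so for any fixed level $m$ the even part is 2-adically irrelevant once $n$ is large enough. The behavior of $v_2(S(n,k))$ at and above level $m_{0}(k)$ is therefore dictated entirely by $v_2(T_k(n))-v_2(k!)$, and the problem reduces to understanding the 2-adic profile of the pure odd-power sum $T_k(n)$.

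The next step exploits the lifting-the-exponent lemma: for any odd $j$ and $m\geq 1$, $v_2(j^{2^{m}}-1)\geq m+2$, so
\[
v_2\bigl(T_k(n+2^{m})-T_k(n)\bigr)\;\geq\;m+2.
\]
Consequently a class $C_{m,j}^{(k)}$ can only be non-terminal when $v_2(T_k(n))$ itself is roughly at least $m+2$. Counting non-terminal classes at level $m$ thus reduces to counting residues $n\bmod 2^{m}$ for which $T_k(n)\equiv 0\pmod{2^{m+2}}$. To establish part (a), I would show that this count stabilizes at some value $\mu(k)$ by expanding $T_k(n)$ modulo successive powers of $2$ as a combination of multiplicative characters of $(\mathbb{Z}/2^{m}\mathbb{Z})^{\times}$ (equivalently, Teichm\"uller lifts of odd residues) and tracking which residues $n\bmod 2^{m}$ survive each refinement of the modulus.

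The expected main obstacle is part (b), which demands not mere stabilization but the rigid \emph{one terminal, one non-terminal} splitting at every surviving node. This is equivalent to the sharp equality
\[
v_2\bigl(T_k(n+2^{m})-T_k(n)\bigr)=m+c(k)
\]
on every non-terminal class, for some absolute constant $c(k)$ depending only on $k$, rather than the mere inequality furnished by LTE. Proving such a sharp equality uniformly in $m$ appears to require a delicate induction — most naturally on $k$, using the recurrence $S(n,k)=S(n-1,k-1)+k\,S(n-1,k)$ to propagate information from smaller Stirling trees, combined with a careful computation of the finite difference $\Delta_{2^{m}}T_k$ to rule out accidental cancellations of high order. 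This is precisely the general-$k$ analogue of the mechanism underlying Theorem 1.2 for $n^{2}+7$, where already one family of polynomials required substantial effort; the combinatorics of $\binom{k}{j}j^{n}$ summed over odd $j$ should ultimately force the splitting pattern, but making this precise and $k$-uniform is the real technical heart of the proof.
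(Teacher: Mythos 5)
The statement you have been asked to prove is Conjecture 1.1 of the paper, which is quoted from the work of Amdeberhan, Manna and Moll on the $2$-adic valuation of Stirling numbers. The paper gives no proof of it: it is stated as a conjecture, and the author explicitly records that it has been established only in the single case $k=5$. So there is no proof in the paper to compare yours against, and a complete argument would be new mathematics rather than a recovery of something in the text.

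Your proposal, however, is not a complete argument; it is a programme whose two decisive steps are left open. For part (a) you reduce the problem, via the decomposition $k!\,S(n,k)=T_k(n)+E_k(n)$ and the bound $v_2\bigl(T_k(n+2^m)-T_k(n)\bigr)\geq m+2$ (which is correct, since $v_2(j^{2^m}-1)=v_2(j^2-1)+m-1\geq m+2$ for odd $j$ and $m\geq 1$), to counting residues $n \bmod 2^m$ with $T_k(n)\equiv 0 \pmod{2^{m+2}}$, and then you say you ``would show'' that this count stabilizes at some $\mu(k)$. No mechanism is offered for why it stabilizes; that claim is essentially a restatement of part (a) in the language of $T_k$, so nothing has been gained beyond a (useful) reformulation. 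For part (b) you yourself isolate the sharp equality $v_2\bigl(T_k(n+2^m)-T_k(n)\bigr)=m+c(k)$ as the crux and acknowledge that proving it ``appears to require a delicate induction'' --- that is, you do not prove it. Note moreover that the term-by-term valuation $v_2(j^{2^m}-1)=v_2(j^2-1)+m-1$ genuinely depends on $j$ (it equals $m+2$ for $j\equiv 3,5 \pmod 8$ but is strictly larger for $j\equiv 1,7 \pmod 8$), so the finite difference $\sum_{j\ \mathrm{odd}}(-1)^{k-j}\binom{k}{j}j^{n}(j^{2^m}-1)$ is subject to exactly the high-order cancellation you would need to rule out, and this is where the real difficulty lives. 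The splitting into odd and even parts and the LTE bound are sound and sensible first steps, but as written the proposal leaves both halves of the conjecture unproven.
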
This conjecture is only established for the case $k = 5$.
      A similar conjecture is given in \cite{six} for the $p$-adic valuation of the Stirling numbers.
     \par In this work,  we discuss the set \begin{center}
      $V_f$ = $\{ v_p(f(m,n)): m,n \in \mathbb{N}\}$;
\end{center}where $f(X,Y) \in \mathbb{Z}[X,Y]$, by the generalized notion of the \textit{valuation tree}.  We believe that the $p$-adic valuation of two variable polynomials has not been studied before.  We define the $p$-adic valuation tree as follows:         
 \begin{definition} Let $p$ be a prime number. Consider the integers $f(x,y)$ for every $(x,y)$ in $\mathbb{Z}^2$. The $p$-adic\textit{ valuation tree} of $f$ is a rooted, labelled $p^2$-ary tree defined recursively as follows:
    
    Suppose that  $v_{0}$ be a root vertex at level $0$. There are $p^2$ edges from this root vertex to its $p^2$ children vertices at level $1$. These vertices correspond to all possible residue classes  $(i_{0},j_{0}) \mod p$.  Label the vertex corresponding to the class $(i_{0},j_{0}) $ with $0$ if $f(i_{0},j_{0}) \not\equiv 0 \mod p$ and with $*$ if $f(i_{0},j_{0}) \equiv 0 \mod p$.  If the label of a vertex is 0, it does not have any children. 
    
     If the label of a vertex is $*$, then it has $p^2$ children at level $2$. These vertices correspond to the residue classes $ (i_{0} + i_{1}p, j_{0} + j_{1}p)\mod p^2$ where $i_{1},j_{1} \in\{0,1,2,...,p-1\} $ and $(i_{0},j_{0}) \mod p$ is the class of the parent vertex.
     
     This process continues recursively so that at the $l^{\textrm{th}}$ level, there are $p^2$ children of any non-terminating vertex in the previous level $(l-1)$, each child of which corresponds to the residue classes  $(i_{0} + i_1p +...+i_{l-1}p^{l-1}, j_{0}+j_1p+ ...+ j_{l-1}p^{l-1})\mod p^l$.  Here $i_{l-1},j_{l-1}$ $\in \{0,1,2,...p-1\}$ and ($i_{0} + i_1p +...+i_{l-2}p^{l-2},  j_{0} + j_1p +...+j_{l-2}p^{l-2}) \mod p^{l-1}$ is the class of the parent vertex.  Label the vertex corresponding to the class $(i,j)$ with $l-1$ if $f(i,j) \not \equiv 0 \mod p^l$ and $*$ if $f(i,j)  \equiv 0 \mod p^l$.  Thus $ i = i_{0} + i_1p +...+i_{l-1}p^{l-1}, j = j_{0} + j_1p +...+j_{l-1}p^{l-1}$. \end{definition}
     \begin{example} Valuation tree of $x^2 + y^2 + xy +x + y +1$\end{example}
      \begin{tikzpicture}
      [sibling distance=4em,level distance=2em,
      every node/.style={shape=circle,draw= blue,align=right}]
        \node{}
        child{node{0}} 
        child{node{0}}
        child{node{0}}
       child{node{*}child{node{1}}
               child{node{1}}
               child{node{1}}
               child{node{1}}}        ;
        
      \end{tikzpicture}

      So \begin{center}
        \[
   v_2(x^2+y^2+xy+x+y+1)= 
\begin{cases}
   1,& \text{if } both \hspace*{.2cm} x, y \hspace*{.2cm} are \hspace*{.2cm}odd.\\
    0,              & \text{otherwise}
\end{cases}
\]
      \end{center}
       Hence the $2$-adic valuation of $x^2+y^2+xy+x+y+1$ admits a closed form. 
       \section{Some examples of valuation tree}
       We will achieve our goal of finding the $2$-adic valuation tree of general two degree polynomial by studying the $2$-adic valuation tree of some polynomials.
       \begin{example} The 2-adic valuation tree of $x^2 + y^2$ is as follows: \end{example}
       \begin{tikzpicture}
      [sibling distance=4em,level distance=3em,
      every node/.style={shape=circle,draw= blue,align=center}]
        \node{}
        child{node{*} [sibling distance =8em]
               child{node{*}[sibling distance =2em]
               child{node{*}}
               child{node{*}}
               child{node{*}}
               child{node{*}}}
               child{node{*}[sibling distance =2em]
               child{node{2}}
               child{node{2}}
               child{node{2}}
               child{node{2}}}
               child{node{*}[sibling distance =2em]
               child{node{2}}
               child{node{2}}
               child{node{2}}
               child{node{2}}}
               child{node{*}[sibling distance =2em]
               child{node{*}}
               child{node{*}}
               child{node{*}}
               child{node{*}}}
               child[missing]
               child[missing]}
       child{node{0}}
       child{node{0}}
       child{node{*} 
               child{node{1}}
               child{node{1}}
               child{node{1}}
               child{node{1}}}        ;
        
      \end{tikzpicture}\\
      
      By investigating the nature of above valuation tree,  we find an interesting pattern. Its study leads to a striking result.  We need some notation to state the result: Consider the binary representation of $b_k$ and $c_k$,\\ 
     \hspace*{2cm} $b_{k}$= $(i_{k}i_{k-1}...i_1i_0)_2$\\
      \hspace*{2cm} $c_{k}$= $(j_{k}j_{k-1}...j_1j_0)_2$ where $i_{0},i_{1},...i_{k-1}$, $j_{0},j_{1}...j_{k-1} $ $\in$ \{0,1\}.
      \begin{theorem}
       Let $v$ be a node at $k$-th level of the valuation tree of $x^2+y^2$.  Let the pair ($b_{k-1},c_{k-1})$ is associated to the vertex $v$. If we have $i_0 = i_1 =...= i_{k-2} = j_0 = j_1 =...= j_{k-2} = 0$, Then
      \begin{enumerate}
      \item If ($i_{k-1}, j_{k-1}$) = (0,0) then all four children of node $v$ will be labelled by $*$.  
      \item  If ($i_{k-1}, j_{k-1}$) = (1,1) then nodes descending from $v$ at $k+1, k+2,...,(2k-1)$-th levels will be labelled by $*$ and at $2k$-th level all nodes descending from $v$ will be labelled by $2k$-1.
    \item  If ($i_{k-1}, j_{k-1}$) =  (1,0) or (0,1) then all nodes descending from $v$ at $k+1, k+2,...,(2k-2)$-th levels will be labelled by $*$ and at ($2k$-1)-th level all nodes descending from $v$ will be labelled by $2k$-2.  
      \end{enumerate}
      \end{theorem}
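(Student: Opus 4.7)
The plan is to reduce the claim to a direct 2-adic valuation computation of $x^2+y^2$ evaluated on descendants of the chosen node $v$, and then read off the labels from the labelling rule of the valuation tree.

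First I would parametrise a generic descendant. If $w$ is a descendant of $v$ at some level $m\geq k$, then the pair $(b_{m-1},c_{m-1})$ attached to $w$ agrees with $(b_{k-1},c_{k-1})$ modulo $2^k$. By the hypothesis that the $k-1$ lowest binary digits of both $b_{k-1}$ and $c_{k-1}$ vanish, this forces
\[
b_{m-1}=i_{k-1}\,2^{k-1}+2^k A,\qquad c_{m-1}=j_{k-1}\,2^{k-1}+2^k B,
\]
for some nonnegative integers $A,B$ depending on $w$. The whole statement now follows from computing $v_2(b_{m-1}^2+c_{m-1}^2)$ in each of the three sub-cases and applying the rule: a node at level $\ell$ is labelled $*$ iff $b_{m-1}^2+c_{m-1}^2\equiv 0\pmod{2^\ell}$, and otherwise terminates with label $\ell-1$.

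In Case 1, $(i_{k-1},j_{k-1})=(0,0)$, so $b_{m-1}^2+c_{m-1}^2=2^{2k}(A^2+B^2)$. Since $2k\geq k+1$ for $k\geq 1$, this is divisible by $2^{k+1}$, hence each of the four children of $v$ at level $k+1$ carries the label $*$. In Case 2, $(i_{k-1},j_{k-1})=(1,1)$, a short expansion gives
\[
b_{m-1}^2+c_{m-1}^2=2^{2k-1}\bigl(1+2(A+B)+2(A^2+B^2)\bigr),
\]
whose inner factor is odd; so $v_2(b_{m-1}^2+c_{m-1}^2)=2k-1$ uniformly in $(A,B)$, yielding label $*$ at levels $k+1,\ldots,2k-1$ and label $2k-1$ at level $2k$. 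In Case 3, the $x\leftrightarrow y$ symmetry lets us assume $(i_{k-1},j_{k-1})=(1,0)$, and then
\[
b_{m-1}^2+c_{m-1}^2=2^{2k-2}\bigl((1+2A)^2+4B^2\bigr),
\]
where the bracket is odd, so $v_2=2k-2$ for every descendant, producing label $*$ at levels $k+1,\ldots,2k-2$ and label $2k-2$ at level $2k-1$.

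The conceptual heart of the argument is that in each case the 2-adic valuation is completely determined by $(i_{k-1},j_{k-1})$ and does \emph{not} depend on the deeper digits chosen to reach a descendant; this uniformity is precisely what allows an entire subtree to be labelled at once. I do not expect a serious obstacle beyond recognising this, since the remainder is a short parity calculation together with careful bookkeeping of the three cases.
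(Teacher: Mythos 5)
Your proposal is correct and follows essentially the same route as the paper: both factor out $2^{2k-2}$ from $b^2+c^2$ using the vanishing of the low-order binary digits and then determine the exact 2-adic valuation of the remaining factor according to the parity pattern of $(i_{k-1},j_{k-1})$. Your explicit parametrisation $b_{m-1}=i_{k-1}2^{k-1}+2^kA$, $c_{m-1}=j_{k-1}2^{k-1}+2^kB$ and the written-out expansion showing the inner factor is odd make the "least power of $2$" step, which the paper only asserts, fully transparent, but the underlying argument is the same.
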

       \begin{proof} We are given that \begin{center}
      $b_{k}$= $b_{k-1} +2^{k} i_{k}$ =($i_{k}i_{k-1}...i_1i_0)_2$\\ 
      $c_{k}$= $c_{k-1} +2^{k}j_{k} = (j_{k}j_{k-1}...j_1j_0)_2$ where\\ $i_{0},i_{1},...i_{k-1}$, $j_{0},j_{1}...j_{k-1} $ $\in$ \{0,1\} \end{center}
       When $(i_{k-1}, j_{k-1})$ = (0,0) then consider \begin{center}
      $b_{k}^2 + c_{k}^2$   mod $2^{k+1}$\\
      $\equiv$  $b_{k-1}^2 + c_{k-1}^2$  mod $2^{k+1}$,  k>0\\ but $b_{k-1} = c_{k-1} = 0$
      so all four children of node $v$ will be labelled by $*$.  
      \end{center}
     Let us consider
    \begin{center}
    $b_{k+l-1}^2 + c_{k+l-1}^2$ \hspace*{.25em} mod \hspace*{.25em}$2^{k+l}, 1\leq l \leq k$ \end{center}
     =$( 2^{k-1}i_{k-1} +...+2^{k+l-1}i_{k+l-1})^2+(2^{k-1}j_{k-1}+...+2^{k+l-1}j_{k+l-1})^2$ \hspace*{.25em} mod \hspace*{.25em}$2^{k+l}$\\ \begin{align}
      =   2^{2k-2}(i_{k-1}+2i_k+...+2^{l+1}i_{k+l-1})^2 + 2^{2k-2}(j_{k-1}+2j_k+...+2^{l+1}j_{k+l-1})^2\mod 2^{k+l}
     \end{align}
    \\
     If $(i_{k-1},j_{k-1})$ = (1,1) then the least power of 2 in (1) is $2^{2k-1}$. 
       so nodes descending from $v$ at $k+1, k+2,...,(2k-1)$-th levels will be labelled by $*$ and at $2k$-th level all nodes descending from $v$ will be labelled by $2k$-1.\\
       If $(i_{k-1},j_{k-1})$ = (1,0) or (0,1) then the least power of 2 in equation (1) is $2^{2k-2}$ so nodes descending from $v$ at $k+1, k+2,...,(2k-2)$-th levels will be labelled by $*$ and at $(2k-1)$-th level all nodes descending from $v$ will be labelled by $2k$-2.\end{proof}\begin{example}
       The $2$-adic valuation tree of $x^2+y^2+xy + x+y$ has also a specific pattern: \end{example}
        \begin{tikzpicture}
      [sibling distance=8em,level distance=3em,
      every node/.style={shape=circle,draw= blue,align=center}]
        \node{}
        child{node{*}[sibling distance=2em]
        child{{node{*}}}
        child{{node{1}}}
        child{{node{1}}}
        child{{node{*}}}}
        child{node{*}[sibling distance=2em]
        child{{node{1}}}
        child{{node{1}}}
        child{{node{*}}}
        child{{node{*}}}}
        child{node{*}[sibling distance=2em]
        child{{node{1}}}
        child{{node{1}}}
        child{{node{$*$}}}
        child{{node{*}}}}
        child{node{0}};
        \end{tikzpicture}\\
        We can formulate the following result for the 2-adic valuation tree of $x^2+y^2+xy + x+y$:
        \begin{theorem}
        Let $v$ be a node labelled with $*$ at level $k$ of the valuation tree of $x^2+y^2+xy + x+y$ for $k\geq 1$.  Then $v$ splits into four vertices at level $k+1$. Exactly two of them are labelled with $*$ and two are labelled with $k.$ The root vertex splits into three vertices with label $*$ and one vertex with label 0.
        \end{theorem}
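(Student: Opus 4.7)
The plan is to read off the labels of the four children of $v$ from a single expansion of $f$ around a representative of $v$. Let $(a,b)$ be any element of the residue class corresponding to $v$, so that $f(a,b)\equiv 0\pmod{2^k}$, and let $s,t\in\{0,1\}$ index the four children, each corresponding to the class of $(a+2^ks,\,b+2^kt)$ modulo $2^{k+1}$. A direct expansion gives
\begin{equation*}
f(a+2^ks,\,b+2^kt)=f(a,b)+2^k\bigl[(2a+b+1)s+(2b+a+1)t\bigr]+2^{2k}\bigl[s^2+st+t^2\bigr].
\end{equation*}
Since $k\ge 1$, we have $2k\ge k+1$, so the last bracket vanishes modulo $2^{k+1}$. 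Writing $f(a,b)=2^kq$ for some integer $q$, this reduces to
\begin{equation*}
f(a+2^ks,\,b+2^kt)\equiv 2^k\bigl[q+(b+1)s+(a+1)t\bigr]\pmod{2^{k+1}},
\end{equation*}
because the $2a\cdot s$ and $2b\cdot t$ contributions produce extra factors of $2$.

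The next step is to observe that the parity of the pair $(a,b)$ is constrained: since $v$ is labelled $\ast$, its level-$1$ ancestor is also $\ast$, so $(a,b)\bmod 2\in\{(0,0),(1,0),(0,1)\}$ (the fourth class $(1,1)$ being excluded by the root computation $f(1,1)=5$). I would then treat the three allowed parities in turn: in case $(a,b)\equiv(0,0)$ the condition for a child to be labelled $\ast$ becomes $q+s+t\equiv 0\pmod 2$; in case $(1,0)$ it becomes $q+s\equiv 0\pmod 2$; in case $(0,1)$ it becomes $q+t\equiv 0\pmod 2$. In each of the three cases, a one-line enumeration over $(s,t)\in\{0,1\}^2$ shows that exactly two of the four pairs satisfy the condition and two do not. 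The former yield children labelled $\ast$, the latter yield children with $v_2=k$ (hence labelled $k$ at level $k+1$), which is the desired two-and-two split.

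Finally the root case is handled by direct computation: evaluating $f$ at $(0,0),(1,0),(0,1),(1,1)$ produces $0,2,2,5$, whose $2$-adic valuations are $\ge 1,\ge 1,\ge 1,0$, giving three $\ast$'s and one $0$ as claimed.

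The main obstacle is a conceptual rather than computational one: one must notice that the quadratic remainder $2^{2k}[s^2+st+t^2]$ is automatically absorbed into $2^{k+1}$ for $k\ge 1$, so that the behaviour of the children at \emph{every} level is controlled by the same affine-in-$(s,t)$ expression; and one must then use the parity restriction on $(a,b)$ inherited from the level-$1$ labelling to rule out the pathological case $(a,b)\equiv(1,1)\pmod 2$, in which the analogous count would fail.
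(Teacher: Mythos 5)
Your proposal is correct and follows essentially the same route as the paper: expand $f$ at $(a+2^ks,\,b+2^kt)$, discard the quadratic remainder modulo $2^{k+1}$, reduce to the affine condition $q+(b+1)s+(a+1)t\equiv 0\pmod 2$, and enumerate the three admissible parities of $(a,b)$ (with $(1,1)$ excluded since $f(1,1)=5$ is odd). The only difference is cosmetic: the paper phrases the expansion in terms of binary digits $(i_k,j_k)$ of the class representatives rather than your increments $(s,t)$.
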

        \begin{proof} Let the pair ($b_{k-1},c_{k-1}$) is associated to the vertex $v.$  So
   \begin{center}
   $b_{k} = 2^{k}i_{k} + b_{k-1}$\\
   $c_{k} = 2^{k}j_{k} + c_{k-1}$\\where $i_{0},i_{1},...i_{k}$, $j_{0},j_{1}...j_{k} $ $\in$ \{0,1\} and ($i_0,j_0) \neq$ (1,1)
   \end{center} We want to find ($i_k,j_k$) such that \begin{center} 
   $b_{k}^2 + c_{k}^2 + b_{k}c_{k} + b_{k} + c_{k} \equiv 0 \mod 2^{k+1}$
   \end{center}
   On putting the expression of $b_{k}$ and $c_{k}$ in above equation, we got \begin{center}
   $b_{k-1}^2 + c_{k-1}^2 + b_{k-1}c_{k-1}+ b_{k-1} + c_{k-1} + 2^k(b_{k-1}j_k+ c_{k-1}i_k + i_{k-1} + j_{k-1}) \equiv 0 \mod 2^{k+1}$
   \end{center} We know that $b_{k-1}^2 + c_{k-1}^2 + b_{k-1}c_{k-1}+ b_{k-1} + c_{k-1}$ = $a2^k$,  $a\in \{0,1\}$  so above equation becomes \begin{center}
   $a2^k + 2^k(b_{k-1}j_k+ c_{k-1}i_k + i_k + j_k)$ $\equiv$ 0 mod $2^{k+1}$\\
   = $a +b_{k-1}j_k+ c_{k-1}i_k + i_k + j_k$ $\equiv$ 0 $\mod$ 2
  \begin{align}
   =  i_k(j_0+1) + j_k(i_0+1) \equiv \hspace*{.3em} a \hspace*{.3em} \mod \hspace*{.3em} 2 
   \end{align}
   \end{center}
   Now if ($i_0,j_0)$ = (0,0) then (2) becomes $i_k + j_k$ $\equiv$ $a$ mod 2.  Hence there are two vertices labelled with $k$ descending from $v$ with $i_k + j_k$ $\not\equiv$ $a$ mod 2 and other two vertices are not terminating labelled with $*$.\\
   If ($i_0,j_0)$ = (1,0) then equation(2) becomes $i_k $ $\equiv$ $a$ mod 2.  Hence there are two vertices labelled with $k$ descending from $v$ with $i_k$ $\not\equiv$ $a$ mod 2 and other two vertices are not terminating labelled with $*$. Similarly for ($i_0,j_0)$ = (0,1), equation(2) becomes $j_k $ $\equiv$ $a$ mod 2.  Hence there are two vertices labelled with $k$ descending from $v$ with $j_k$ $\not\equiv$ $a$ mod 2 and other two vertices are not terminating labelled with $*$.\end{proof}
   \begin{example} The $2$-adic valuation tree of $xy + x+y+1$ is as follows: \end{example}
    \begin{tikzpicture}
      [sibling distance=4.96em,level distance=3em,
      every node/.style={shape=circle,draw= blue,align=center}]
      \node{}
      child{node{0}}
      child{node{*}[sibling distance=4em]
               child{node{1}}
               child{node{*}[sibling distance = 2em]
                  child{node{2}}
                  child{node{*}}
                  child{node{2}}
                  child{node{*}}}
                  child{node{1}}
                  child[missing]
                child{node{*}[sibling distance = 2em]
                  child{node{2}}
                  child{node{*}}
                  child{node{2}}
                  child{node{*}}}}
      child{node{*}}
      child[missing]
      child[missing]
      child[missing]
      child{node{*}
      [sibling distance=2em]
               child{node{*}[sibling distance = 2em]
                  child{node{2}}
                  child{node{2}}
                  child{node{2}}
                  child{node{2}}}child[missing]child[missing]child[missing]
               child{node{*}
               child{node{*}}
               child{node{*}}
               child{node{*}}
               child{node{*}}}child[missing]
                child{node{*}}child[missing]child[missing]
                child{node{*}
                child{node{*}}child{node{*}}child{node{*}}
                child{node{*}}}
                }
      
      ;
      \end{tikzpicture}\\
     By analysing the pattern in the above tree, we can state following result:
       \begin{theorem}Let $v$ be a node labelled with * at $k$-th level of the valuation tree of $xy + x+ y+1$, for $k$>0.  Then this vertex $v$ splits into four nodes such that
      \begin{enumerate}
      \item If $(i_0,j_0)$ = (1,1) then all four nodes will be labelled by $*$ or k+1.
      \item If $(i_0,j_0)$=  (0,1), (1,0) and (0,0) then exactly two will be labelled by $*$.
       \end{enumerate}The root vertex $v_0$ splits into four vertices, each labelled by $*$.
      \end{theorem}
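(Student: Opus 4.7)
The plan is to exploit the factorization $xy+x+y+1=(x+1)(y+1)$, which reduces everything to
$$
v_2(f(b,c)) = v_2(b+1) + v_2(c+1).
$$
A node at level $k$ is labelled $*$ precisely when $(b_{k-1}+1)(c_{k-1}+1) \equiv 0 \pmod{2^k}$, and the four children at level $k+1$ differ from $(b_{k-1}, c_{k-1})$ by $(2^k i_k,\, 2^k j_k)$ with $(i_k, j_k) \in \{0,1\}^2$. The whole theorem drops out of one expansion modulo $2^{k+1}$ together with a case split on the parities $(i_0, j_0)$ of $(b_{k-1}, c_{k-1})$.

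The key identity to write down is
$$
(b_k+1)(c_k+1) \equiv (b_{k-1}+1)(c_{k-1}+1) + 2^k\bigl[i_k(c_{k-1}+1) + j_k(b_{k-1}+1)\bigr] \pmod{2^{k+1}},
$$
where the term $2^{2k} i_k j_k$ has been dropped because $2k \geq k+1$ for $k \geq 1$. In the case $(i_0, j_0) = (1,1)$, both $b_{k-1}+1$ and $c_{k-1}+1$ are even, so both bracketed terms are already divisible by $2^{k+1}$; hence $(b_k+1)(c_k+1) \equiv (b_{k-1}+1)(c_{k-1}+1) \pmod{2^{k+1}}$ independently of $(i_k, j_k)$, and all four children receive the same label --- either all $*$ or all $k$. (The ``$k+1$'' in the statement of case~1 appears to be a typo for $k$, the uniform terminating label at level $k+1$.)

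In the cases $(i_0, j_0) = (1,0)$ or $(0,1)$, exactly one of $b_{k-1}+1$ and $c_{k-1}+1$ is odd, which forces the other to carry all of the $2^k$-divisibility. Taking $(1,0)$ for concreteness, $b_{k-1}+1 \equiv 0 \pmod{2^k}$, so the $j_k$-term drops modulo $2^{k+1}$ and the identity collapses to $(b_k+1)(c_k+1) \equiv (a + i_k)\,2^k \pmod{2^{k+1}}$, where $a \in \{0,1\}$ is the parity of $(b_{k-1}+1)(c_{k-1}+1)/2^k$; then exactly the two choices with $i_k \equiv a$ give a $*$ and the other two give label $k$. The case $(0,1)$ is symmetric. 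The $(0,0)$ branch listed in the statement is actually vacuous for $k \geq 1$, since both factors would then be odd and the parent could not have carried a $*$; this deserves an explicit sentence. For the root, direct computation gives $f(0,0)=1,\, f(0,1)=f(1,0)=2,\, f(1,1)=4$, so the children carry labels $0, *, *, *$, and the last sentence of the statement should read ``three $*$'s and one $0$''. There is no real analytic obstacle --- the main care is book-keeping the cross terms correctly and flagging these minor inaccuracies in the statement.
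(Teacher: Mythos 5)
Your proof is correct, and its computational core coincides with the paper's: your key congruence
\[
(b_k+1)(c_k+1) \equiv (b_{k-1}+1)(c_{k-1}+1) + 2^k\bigl[i_k(c_{k-1}+1)+j_k(b_{k-1}+1)\bigr] \pmod{2^{k+1}}
\]
is exactly the paper's equation $i_k(j_0+1)+j_k(i_0+1)\equiv a \pmod 2$ after reducing $b_{k-1}\equiv i_0$ and $c_{k-1}\equiv j_0 \pmod 2$, and the ensuing case split on $(i_0,j_0)$ is the same. What you do differently is to route the whole argument through the factorization $xy+x+y+1=(x+1)(y+1)$, which the paper never invokes. That buys you several things the paper misses: the closed description $v_2(f(x,y))=v_2(x+1)+v_2(y+1)$ of the entire tree; the observation that the sub-case $(i_0,j_0)=(0,0)$ of item~2 is vacuous for $k\geq 1$ (both factors are then odd, so such a node can never carry a $*$), whereas the paper formally treats this case without noticing its hypothesis is empty; and the corrections to the statement itself --- the terminal label of a child at level $k+1$ is $k$, not $k+1$ (consistent with the paper's own Definition~1.2 and with the conclusion of its proof), and the root splits into one vertex labelled $0$ and three labelled $*$ (as the paper's own figure for Example~2.3 shows), not four labelled $*$. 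Your handling of the $(1,1)$ case, where all four children inherit a common label determined by the parity $a$, and of the $(1,0)$/$(0,1)$ cases, where the $2^k$-divisibility is forced onto a single factor so that exactly two children are $*$, agrees with the paper's conclusions; the factorization simply makes the bookkeeping transparent.
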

         \begin{proof} Let $(b_{k-1},c_{k-1})$ is associated to the vertex $v$ at $k$-th level of the valuation tree of $xy + x+ y+1$, for $k$>0.  So  \begin{center}
   $b_{k} = 2^{k}i_{k} + b_{k-1}$\\
   $c_{k} = 2^{k}j_{k} + c_{k-1}$\\where $i_{0},i_{1},...i_{k}$, $j_{0},j_{1}...j_{k} $ $\in$ \{0,1\}.
   \end{center} We want to find ($i_k,j_k$) such that \begin{center} 
   $b_{k}c_{k} + b_{k} + c_{k} + 1$$ \equiv$ 0 $\mod$ $2^{k+1}$
   \end{center}
   On putting the expression of $b_{k}$ and $c_{k}$ in above equation, we got \begin{center}
   $ b_{k-1}c_{k-1}+ b_{k-1} + c_{k-1} +1+ 2^k(b_{k-1}j_k+ c_{k-1}i_k + i_{k-1} + j_{k-1})$ $\equiv$ 0 $\mod$ $2^{k+1}$
   \end{center} We know that $b_{k-1}c_{k-1}+ b_{k-1} + c_{k-1}+1$ = $a2^k$,  $a\in \{0,1\}$  so above equation becomes \begin{center}
   $a2^k + 2^k(b_{k-1}j_k+ c_{k-1}i_k + i_k + j_k)$ $\equiv$ 0 $\mod$ $2^{k+1}$\\
   = $a +b_{k-1}j_k+ c_{k-1}i_k + i_k + j_k$ $\equiv$ 0 $\mod$ 2
  \begin{align}
   =  i_k(j_0+1) + j_k(i_0+1) \equiv \hspace*{.3em} a \hspace*{.3em}\mod\hspace*{.3em} 2 
   \end{align}
   \end{center} Now if ($i_0,j_0)$ = (0,0) then equation(3) becomes $i_k + j_k$ $\equiv$ $-a$ $\mod$ 2. Hence there are two vertices labelled with $k$ descending from $v$ with $i_k + j_k$ $\not\equiv$ $-a$ $\mod$ 2 and other two vertices are not terminating labelled with $*$.\\
   If ($i_0,j_0)$ = (1,0) then equation(3) becomes $i_k $ $\equiv$ $a$ $\mod$ 2.  Hence there are two vertices labelled with $k$ descending from $v$ with $i_k$ $\not\equiv$ $a$ $\mod$ 2 and other two vertices are not terminating labelled with $*$. Similarly for ($i_0,j_0)$ = (0,1), equation(3) becomes $j_k $ $\equiv$ $a$ $\mod$ 2.  Hence there are two vertices labelled with $k$ descending from $v$ with $j_k$ $\not\equiv$ $a$ $\mod$ 2 and other two vertices are not terminating labelled with $*$. 
   \\If $(i_0,j_0)$ = (1,1) then equation(3) becomes $0 \equiv a \mod 2$. Hence $v$ splits into four nodes labelled by $*$ or $k$ depending upon whether $a \equiv 0 \mod 2$ or $a \equiv 1 \mod 2$. \end{proof}
   \section{The algebraic meaning of an infinite branch of the valuation tree}
   In the last section, we have seen examples with infinite branches.  But what is the algebraic meaning of a such a phenomena? From the definition of valuation tree we can deduce the following results:\begin{lemma} Let $i_0,j_0 \in \{0,1,2,...,p-1\}$ such that $f(i_0,j_0)$ $\not \equiv 0 \mod p$ then \begin{center}
   $v_p(f(i,j))$ = 0 for $i \equiv i_0 \mod p$ and $j \equiv j_0 \mod p$.
\end{center}    
   \end{lemma}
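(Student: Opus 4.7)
The plan is to reduce the claim to the standard fact that polynomial evaluation respects congruences. First I would write $i = i_0 + pa$ and $j = j_0 + pb$ for some non-negative integers $a, b$, using the hypothesis that $i \equiv i_0 \pmod p$ and $j \equiv j_0 \pmod p$. The goal then reduces to showing $f(i_0 + pa, j_0 + pb) \not\equiv 0 \pmod p$.

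Next I would invoke the general fact that for any polynomial $g \in \mathbb{Z}[X,Y]$ and any integers $x, y, u, v$ with $x \equiv u \pmod p$ and $y \equiv v \pmod p$, one has $g(x,y) \equiv g(u,v) \pmod p$. This follows by expanding $(i_0 + pa)^r (j_0+pb)^s$ via the binomial theorem: every term other than $i_0^r j_0^s$ carries a factor of $p$, hence vanishes modulo $p$. Summing over all monomials of $f$ with their integer coefficients then gives $f(i_0 + pa, j_0 + pb) \equiv f(i_0, j_0) \pmod p$.

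Combining the two steps, $f(i,j) \equiv f(i_0, j_0) \pmod p$, and since by hypothesis $f(i_0, j_0) \not\equiv 0 \pmod p$, we conclude $f(i,j) \not\equiv 0 \pmod p$. By definition of the $p$-adic valuation, this is equivalent to $v_p(f(i,j)) = 0$, which is the desired conclusion.

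The argument is entirely routine, so there is essentially no obstacle; the only thing to be careful about is making explicit that $f$ has integer coefficients (so that the congruence manipulation is valid over $\mathbb{Z}$) and that the conclusion is about valuation rather than merely non-vanishing modulo $p$, which follows immediately from the definition.
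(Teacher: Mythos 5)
Your proof is correct and matches the intended argument: the paper states this lemma without proof as an immediate consequence of the definition, and the routine congruence computation $f(i,j)\equiv f(i_0,j_0) \pmod p$ you spell out via the binomial expansion is exactly the verification being taken for granted. Nothing is missing.
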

   \begin{lemma} Assume $i_0,j_0, i_1, j_1 \in \{0,1,2,...,p-1\}$ such that $f(i_0,j_0)$ $ \equiv 0 \mod p$ and $f(i_0 + i_1p, j_0+j_1p)$ $\not \equiv 0 \mod p^2$ then \begin{center}
   $v_p(f(i,j))$ = 1 for $i \equiv i_0+i_1p\mod p^2$ and $j \equiv j_0+j_1p\mod p^2$.
   \end{center} \end{lemma}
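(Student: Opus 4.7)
My plan is to reduce the lemma to the basic congruence property of integer polynomials: if $(x,y) \equiv (x',y') \pmod{p^k}$ then $f(x,y) \equiv f(x',y') \pmod{p^k}$ for any $f \in \mathbb{Z}[X,Y]$. This is the same principle that implicitly justifies the well-definedness of the valuation tree in Definition 1.3, so I will state it once and then apply it twice, at levels $p$ and $p^2$.

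First I would set $a = i_0 + i_1 p$ and $b = j_0 + j_1 p$, and write any $(i,j)$ in the target residue class as $i = a + p^2 s$, $j = b + p^2 t$ for some $s, t \in \mathbb{Z}$. Expanding a monomial $X^\alpha Y^\beta$ at $(a + p^2 s,\, b + p^2 t)$ via the binomial theorem shows that all terms other than $a^\alpha b^\beta$ carry at least one factor of $p^2$; summing over monomials of $f$ (which has integer coefficients) gives the congruence
\begin{equation}
f(i,j) \;\equiv\; f(a,b) \pmod{p^2}. \notag
\end{equation}

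Next I would use the same argument at the coarser level: since $a \equiv i_0 \pmod p$ and $b \equiv j_0 \pmod p$, we obtain $f(a,b) \equiv f(i_0,j_0) \equiv 0 \pmod p$ by the first hypothesis. Combined with the second hypothesis $f(a,b) \not\equiv 0 \pmod{p^2}$, this forces $v_p(f(a,b)) = 1$ exactly. By the displayed congruence above, $f(i,j)$ and $f(a,b)$ differ by a multiple of $p^2$, so $v_p(f(i,j)) = v_p(f(a,b)) = 1$ as claimed.

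There is no real obstacle here; the only point that requires care is the congruence $f(x + p^k s,\, y + p^k t) \equiv f(x,y) \pmod{p^k}$ for $f \in \mathbb{Z}[X,Y]$, which I would justify monomial-by-monomial via the binomial expansion as sketched above (or, equivalently, by the bivariate Taylor formula with integer partial derivatives divided by factorials absorbed into $\mathbb{Z}[X,Y]$-coefficients). Once that is recorded, the lemma follows in two lines, and the analogous statement at higher levels of the tree would be proved by the same template with $p^2$ replaced by $p^{\ell}$.
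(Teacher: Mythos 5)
Your proof is correct and is exactly the standard argument: the congruence $f(x+p^k s, y+p^k t)\equiv f(x,y) \pmod{p^k}$ applied at levels $p$ and $p^2$ pins down $v_p(f(a,b))=1$ and propagates it to the whole residue class. The paper states this lemma without any proof, presenting it as immediate from the definition of the valuation tree, so your write-up simply supplies the justification the paper leaves implicit; there is no gap and nothing to reconcile.
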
Continuing this process produces the next lemma: \begin{lemma} Let $i_0,j_0, i_1, j_1,...,i_{n},j_{n} \in \{0,1,2,...,p-1\}$ satisfying \begin{center}
   $f(i_0,j_0)$ $\equiv 0 \mod p$\\
   $f(i_0+i_1p,j_0+j_1p)$ $\equiv 0 \mod p^2$\\
   $f(i_0+i_1p,j_0+j_1p)$ $\equiv 0 \mod p^2$\\
   ...  ...  ...\\$f(i_0+i_1p+...+i_{n-1}p^{n-1},j_0+j_1p+...+j_{n-1}p^{n-1})$ $\equiv 0 \mod p^{n+1}$ and \\
   $f(i_0+i_1p+...+i_np^n,j_0+j_1p+...+j_np^n)$ $\not\equiv 0 \mod p^{n+1}$.
   \end{center}Then any $(i,j)$ = $(i_0+i_1p+...+i_np^n,j_0+j_1p+...+j_np^n)$ $\mod p^{n+1}$, satisfies \begin{center}
   $v_p(f(i,j))$ = $n$.
   \end{center}
    
   \end{lemma}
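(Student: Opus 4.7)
The plan is to reduce the lemma to a single underlying fact: polynomials with integer coefficients preserve congruences, meaning that for any $F \in \mathbb{Z}[X,Y]$ and any integers $a,a',b,b'$ with $a \equiv a'$ and $b \equiv b' \pmod{p^{m}}$, one has $F(a,b) \equiv F(a',b') \pmod{p^{m}}$. This follows monomial-by-monomial from the identity $a^{r}b^{s} - (a')^{r}(b')^{s} = (a^{r}-(a')^{r})b^{s} + (a')^{r}(b^{s}-(b')^{s})$ and the fact that $p^{m}$ divides each difference. This one property will supply both the upper and the lower bound on $v_{p}(f(i,j))$.

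First I would set $I = i_{0}+i_{1}p+\cdots+i_{n}p^{n}$ and $J = j_{0}+j_{1}p+\cdots+j_{n}p^{n}$, and let $(i,j)$ be any pair with $(i,j) \equiv (I,J) \pmod{p^{n+1}}$. Applying the congruence fact at modulus $p^{n+1}$ gives $f(i,j) \equiv f(I,J) \pmod{p^{n+1}}$. Since the final hypothesis says $f(I,J) \not\equiv 0 \pmod{p^{n+1}}$, this already forces $v_{p}(f(i,j)) = v_{p}(f(I,J)) \leq n$, which is the upper bound.

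For the lower bound, I would use that $I \equiv i_{0}+i_{1}p+\cdots+i_{n-1}p^{n-1} \pmod{p^{n}}$ and likewise for $J$, because the trailing $i_{n}p^{n}$ (resp.\ $j_{n}p^{n}$) term vanishes modulo $p^{n}$. Applying the congruence fact again, this time at modulus $p^{n}$, gives $f(I,J) \equiv f\bigl(\sum_{k=0}^{n-1}i_{k}p^{k},\sum_{k=0}^{n-1}j_{k}p^{k}\bigr) \pmod{p^{n}}$, and by the penultimate hypothesis (correcting the evident off-by-one typo so that the line for the $m$-term truncation reads $\equiv 0 \pmod{p^{m}}$) this quantity is $\equiv 0 \pmod{p^{n}}$. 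Hence $v_{p}(f(I,J)) \geq n$, and combined with the upper bound this gives $v_{p}(f(i,j)) = n$.

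I do not expect any serious obstacle: the statement is really just an explicit reformulation of the fact that the labels appearing on the valuation tree are well-defined on residue classes. The only genuine ingredient is the polynomial-congruence property, which is standard, and a careful observation that only the last two lines of the hypothesis chain are truly needed — the earlier lines are automatic consequences of the congruence property applied at coarser moduli. The preceding Lemmas 3.1 and 3.2 are the $n=0$ and $n=1$ cases of exactly the same argument, so if a fully inductive proof is preferred I would simply induct on $n$, using Lemma 3.2 as the inductive step after restricting from modulus $p^{n+1}$ down to modulus $p^{n}$.
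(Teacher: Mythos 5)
Your proof is correct. Note that the paper states this lemma without any proof at all -- it is simply asserted to follow from the definition of the valuation tree ("Continuing this process produces the next lemma") -- so your argument supplies exactly the details the paper leaves implicit: the only real ingredient is the congruence-preservation property $a\equiv a',\ b\equiv b' \pmod{p^m} \Rightarrow F(a,b)\equiv F(a',b') \pmod{p^m}$ for $F\in\mathbb{Z}[X,Y]$, applied once at modulus $p^{n+1}$ for the upper bound and once at modulus $p^n$ for the lower bound. You are also right that the modulus $p^{n+1}$ in the penultimate hypothesis is an off-by-one typo for $p^n$, and that the earlier lines of the hypothesis chain are redundant consequences of that penultimate line.
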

   \begin{theorem}
   Any infinite branch in the tree  associated to polynomial $f(x,y)$ corresponds to a root of $f(x,y$) = 0 in $\mathbb{Q}_p^2$.
   \end{theorem}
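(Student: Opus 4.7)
The plan is to extract from an infinite branch two $p$-adic integers $\alpha,\beta\in\mathbb{Z}_p$ and then show by a continuity argument that $f(\alpha,\beta)=0$.

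Concretely, suppose we are given an infinite branch. By Lemma 3.3 (and the construction of the tree), traversing it from the root produces two sequences of digits $i_0,i_1,i_2,\ldots$ and $j_0,j_1,j_2,\ldots$ in $\{0,1,\ldots,p-1\}$ such that, setting
\[
\alpha_n = \sum_{k=0}^{n} i_k p^k, \qquad \beta_n = \sum_{k=0}^{n} j_k p^k,
\]
we have $f(\alpha_n,\beta_n)\equiv 0\pmod{p^{n+1}}$ for every $n\geq 0$ (otherwise the branch would terminate at some finite level, by Lemma 3.3).

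The first step is to note that the sequences $\{\alpha_n\}$ and $\{\beta_n\}$ are Cauchy in $\mathbb{Q}_p$, since $|\alpha_{n+1}-\alpha_n|_p\leq p^{-(n+1)}$ and similarly for $\beta_n$. By completeness of $\mathbb{Z}_p$ they converge to elements $\alpha=\sum_{k\geq 0} i_k p^k$ and $\beta=\sum_{k\geq 0} j_k p^k$ of $\mathbb{Z}_p\subset\mathbb{Q}_p$. Moreover $|\alpha-\alpha_n|_p\leq p^{-(n+1)}$ and $|\beta-\beta_n|_p\leq p^{-(n+1)}$.

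The second step is to use that $f\in\mathbb{Z}[x,y]$, viewed as a function $\mathbb{Q}_p^2\to\mathbb{Q}_p$, is continuous with respect to the $p$-adic metric (polynomial maps are continuous since addition and multiplication are continuous in $\mathbb{Q}_p$). Therefore $f(\alpha_n,\beta_n)\to f(\alpha,\beta)$ in $\mathbb{Q}_p$. On the other hand, by construction $v_p(f(\alpha_n,\beta_n))\geq n+1$, so $|f(\alpha_n,\beta_n)|_p\leq p^{-(n+1)}\to 0$, i.e.\ $f(\alpha_n,\beta_n)\to 0$. Uniqueness of limits gives $f(\alpha,\beta)=0$, so $(\alpha,\beta)\in\mathbb{Q}_p^2$ is a root of $f$, as required.

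The only conceptual point that needs care, and thus the main (mild) obstacle, is the matching between the combinatorial object \emph{infinite branch} and the analytic object \emph{coherent sequence of approximate roots}: one must observe that the labelling rule in Definition 1.3 forces a non-terminal vertex at level $\ell$ to correspond to a pair $(i,j)\bmod p^\ell$ with $f(i,j)\equiv 0\pmod{p^\ell}$, so that an infinite branch is exactly a compatible sequence of such pairs. Once this dictionary is set up, the completeness/continuity argument above yields the theorem immediately; no further calculation is needed.
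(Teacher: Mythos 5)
Your proposal is correct and follows essentially the same route as the paper: extract the digit sequences from the infinite branch, observe that the partial sums are Cauchy and hence converge in $\mathbb{Z}_p$ by completeness, and then combine continuity of the polynomial map with the fact that $v_p(f(\alpha_n,\beta_n))\to\infty$ to conclude $f(\alpha,\beta)=0$. Your write-up is in fact slightly cleaner than the paper's, which momentarily states a non-vanishing condition modulo $p^{n+1}$ that cannot hold along a genuinely infinite branch, but the underlying argument is identical.
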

   \begin{proof} Let the sequence of indices generated to come an infinite branch of the tree at $n$-th level is $(a_n,b_n)$ = $(i_0 + i_1p +...+ i_{n-1}p^{n-1},  j_0 + j_1p +...+ j_{n-1}p^{n-1}$) where $i_{0},i_{1},...i_{n-1}$ and  $j_{0},j_{1}...j_{n-1} $ $\in \{0,1,2,...p-1\}$ such that\\
     \hspace*{4cm}  $f(i_{0}$ ,$j_{0}$)$\equiv 0 \mod p$\\
     \hspace*{3cm}    $f(i_{0} + i_1p, j_{0}+j_1p)$ $\equiv 0 \mod$ $p^2$\\
     \hspace*{2cm}   $f(i_{0} + i_1p + i_{2}p^2 , j_{0}+j_1p+ j_{2}p^2)$ $\equiv   0 \mod$ $p^3$\\
     \hspace*{5cm} ..........\\\\
     $f(i_{0} + i_1p +...+i_{n-1}p^{n-1}, j_{0}+j_1p+ ...+ j_{n-1}p^{n-1})$ $\equiv 0\mod$ $p^n$ and\\ $f(i_{0} + i_1p + i_{2}p^2 +...+i_{n}p^n, j_{0}+j_1p+ j_{2}p^2 + ...+ j_{n}p^n)$ $\not$$\equiv$  $ 0\mod$ $p^{n+1}$ \\
      Now $a_n$ and $b_n$ satisfy: 
      0 $\leq$  $a_n ,  b_n$ $\leq$ $ p^n$ and $a_n$ $ \equiv$ $ a_{n+1}$ $ mod$ $ p^n$ , $ b_n$ $\equiv$ $ b_{n+1}$  $mod$ $ p^n$.  Hence sequences $a_n$ and $b_n$ are convergent to some element in the field $\mathbb{Q}_p$.  Let ($a_n ,  b_n$) converges to $(x,y)$ for $x , y$ $\in$  $\mathbb{Q}_p$. Since the polynomial $f(x,y)$ is continuous so $f(a_n,b_n)$ converges to $f(x,y)$. Now by lemma (3.3), we know that $v_p(f(a_n,b_n))$ tends to $\infty$ as $n$ tends to $\infty$ so $f(a_n,b_n)$ tends to 0 when $n$ tends to $\infty$. Hence $f(x,y)$ = 0. \end{proof} 
      \begin{corollary} The $p$-adic valuation $v_p(f(x,y))$ admits a closed form formula (there exist a natural  number $M$ such that $v_p(f(x,y))<M$ $ \forall$ $ x,y$) if the equation $f(x,y)$ = 0 has no solution in $\mathbb{Q}_p^2.$\end{corollary}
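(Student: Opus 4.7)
The natural strategy is to prove the contrapositive: assume the valuation $v_p(f(x,y))$ is unbounded over $\mathbb{N}^2$ (or $\mathbb{Z}^2$) and produce a $\mathbb{Q}_p^2$-zero of $f$. This way the corollary becomes an immediate consequence of Theorem 3.4, which already converts infinite branches into $p$-adic roots.

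The plan is to argue as follows. First, suppose that for every natural number $M$ there exist integers $x,y$ with $v_p(f(x,y))\geq M$. By the construction of the valuation tree together with Lemma 3.3, any such pair $(x,y)$ determines a path in the tree that stays at starred (non-terminating) vertices through at least level $M$. Hence the tree contains arbitrarily long paths of $*$-labelled nodes starting from the root. Next, I would invoke König's lemma: the tree is $p^2$-ary, so it is locally finite, and a locally finite tree with paths of unbounded length must contain an infinite branch. This is the step where one has to be a little careful, but it is essentially textbook — one builds the infinite branch inductively by choosing, at each level, a child that still has descendants at every greater depth, which is possible because there are only finitely many children and infinitely many long paths must pass through at least one of them.

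Once an infinite branch is in hand, Theorem 3.4 applies directly and yields a point $(x_0,y_0)\in\mathbb{Q}_p^2$ with $f(x_0,y_0)=0$, contradicting the hypothesis that $f$ has no $\mathbb{Q}_p^2$-zero. Therefore the valuation must be bounded by some $M\in\mathbb{N}$, and boundedness is precisely what is meant by admitting a closed form in the sense of the corollary: the tree terminates at a finite depth, which gives only finitely many residue classes $(i \bmod p^{M}, j \bmod p^{M})$ to consider, on each of which $v_p(f(x,y))$ is constant.

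The main obstacle — to the extent there is one — is the clean application of König's lemma in this bivariate setting, together with making explicit that ``closed form'' in the corollary's parenthetical means exactly ``the valuation tree is finite''. Both points are minor once one notices that each node has exactly $p^2$ children (not $p$, as in the one-variable case), so local finiteness is automatic and no new ingredient beyond Theorem 3.4 is needed.
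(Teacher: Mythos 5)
Your proof is correct and is essentially the argument the paper intends: the paper states this corollary immediately after Theorem 3.4 with no proof at all, treating it as an immediate consequence of that theorem. Your contrapositive argument — unbounded valuation forces arbitrarily deep $*$-labelled nodes, K\"onig's lemma on the locally finite $p^2$-ary tree then yields an infinite branch, and Theorem 3.4 converts that branch into a zero of $f$ in $\mathbb{Q}_p^2$ — correctly supplies the one step (the K\"onig's lemma compactness argument) that the paper leaves entirely implicit.
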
\par Since the polynomials in Examples (3.1-3.3) have zeros in $\mathbb{Q}_2^2$ so they do not admit closed form formula for $2$-adic valuation.
      \section{The 2-adic valuation tree of \textbf{$ax^2+by^2$}} 
      We are slowly inching towards our goal of findind the $2$-adic valuation tree of the general two degree polynomial $f(X,Y)\in \mathbb{Z}[X,Y]$.  We can generalise Example (3.1).  Consider the binary representation of $b_k$ and $c_k$: \begin{center}$b_{k}$= $(i_{k}i_{k-1}...i_1i_0)_2$
      , $c_{k}$= $(j_{k}j_{k-1}...j_1j_0)_2$ \end{center} where $i_{0},i_{1},...i_{k}$, $j_{0},j_{1}...j_{k} $ $\in$ \{0,1\}. \begin{theorem}
        Let $v$ be a vertex at $k$-th level of the valuation tree of $ax^2+by^2$ where $a = 2^n\alpha$, $b = 2^m\beta$,  $\alpha$  and $\beta$ are odd. Let $\gamma = min(m,n)$.  Suppose that the pair ($b_{k-1},c_{k-1})$,  in the above notation is associated to vertex $v$.  Further suppose that $i_0 = i_1 =...= i_{k-2} = j_0 = j_1 =...= j_{k-2} = 0$.  Then
      \begin{enumerate}
      \item the pair ($i_{k-1}, j_{k-1}$) = (0,0) implies  all four children of vertex $v$ are labelled by $*$.  
      \item  If ($i_{k-1}, j_{k-1}$) = (1,1) and if $w$ is a vertex descending from $v$ at $l$-th level then $w$ is labelled by $*$ whenever $l$ $\in$ \{$k+1, k+2,...,(2k+\gamma-1)$\}  and by $(2k+ \gamma$-1) for $l$ = $2k + \gamma$. 
    \item  If ($i_{k-1}, j_{k-1}$) =  (1,0) or (0,1) and if $w$ is a vertex descending from $v$ at $l$-th levels then $w$ is labelled by $*$ whenever $l$ $\in$ \{$k+1, k+2,...,...,(2k+\gamma-2)$\} and by $2k+ \gamma-2$ when $l$ = $2k+ \gamma-1$.
      \end{enumerate}
\end{theorem}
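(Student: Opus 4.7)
The plan is to mirror the proof of Theorem 2.1 while keeping careful track of the extra powers of $2$ supplied by $a=2^n\alpha$ and $b=2^m\beta$. First I would use the hypothesis $i_0=\cdots=i_{k-2}=j_0=\cdots=j_{k-2}=0$ to write $b_{k+l-1}=2^{k-1}A_l$ and $c_{k+l-1}=2^{k-1}B_l$, where
\[
A_l = i_{k-1}+2i_k+\cdots+2^l i_{k+l-1},\qquad B_l = j_{k-1}+2j_k+\cdots+2^l j_{k+l-1}.
\]
Then
\[
a\, b_{k+l-1}^2+b\, c_{k+l-1}^2 \;=\; 2^{2k-2+\gamma}\Bigl(2^{n-\gamma}\alpha A_l^2+2^{m-\gamma}\beta B_l^2\Bigr),
\]
which already shows that any terminal valuation must be at least $2k-2+\gamma$; this factor explains the $\gamma$ additional levels of $*$-labels compared with the $x^2+y^2$ case.

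For Case~1 ($(i_{k-1},j_{k-1})=(0,0)$) both $b_{k-1}$ and $c_{k-1}$ vanish, so any child at level $k+1$ has $ab_k^2+bc_k^2$ divisible by $2^{2k+\gamma}\geq 2^{k+1}$ whenever $k\ge 1$, making all four children non-terminal, exactly as in the analogous step of Theorem 2.1. For Cases~2 and~3 I would analyse the parity of the bracketed quantity according to whether $A_l$ and $B_l$ are odd. In Case~3 exactly one of them is odd; after dividing through by $2^\gamma$ the corresponding $\alpha$- or $\beta$-term has odd leading factor while the other term is strictly more $2$-divisible, so the bracket has $2$-adic valuation $0$ and the full expression has valuation exactly $2k+\gamma-2$, yielding $*$'s for $l\in\{k+1,\dots,2k+\gamma-2\}$ and the terminal label $2k+\gamma-2$ at level $2k+\gamma-1$. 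In Case~2 both $A_l$ and $B_l$ are odd, so after aligning by $\min(n,m)=\gamma$ the two contributions of odd leading coefficient sum to an even number; this bumps the terminal level by one and gives label $2k+\gamma-1$ at level $2k+\gamma$.

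The main obstacle is the Case~2 analysis when $n=m$: here $\alpha A_l^2+\beta B_l^2 \equiv \alpha+\beta \pmod{8}$ (using that $A_l^2,B_l^2\equiv 1\pmod 8$ when odd), so one needs the $2$-adic valuation of this sum to be exactly $1$, in direct parallel with the observation in Theorem 2.1 that $A_l^2+B_l^2\equiv 2\pmod 4$. This requires the mild condition $\alpha+\beta\not\equiv 0\pmod 4$; under it I would verify that the computed valuations depend only on $(b_{k-1},c_{k-1})$ and are independent of the extension digits $i_k,j_k,\ldots,i_{k+l-1},j_{k+l-1}$, which is exactly what is needed for the whole subtree descending from $v$ to share the claimed pattern. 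The remaining algebra amounts to checking that higher-order perturbations contribute valuation at least $2k+\gamma+1$, hence do not disturb the leading analysis.
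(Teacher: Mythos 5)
Your proposal follows the same route as the paper's proof: write $b_{k+l-1}=2^{k-1}A_l$, $c_{k+l-1}=2^{k-1}B_l$ using the vanishing of the low digits, factor the largest guaranteed power of $2$ out of $ab_{k+l-1}^2+bc_{k+l-1}^2$, and read the labels off the $2$-adic valuation of the remaining bracket. At the key step you are in fact more careful than the paper: its displayed equation (4) writes the bracket as $\alpha(\cdots)^2+\beta(\cdots)^2$, silently discarding the factors $2^{n-\gamma}$ and $2^{m-\gamma}$ that you correctly retain; the paper's form is only valid when $n=m$. You are also right that Case 2 with $n=m$ needs the hypothesis $\alpha+\beta\not\equiv 0\pmod 4$: the paper simply asserts that the least power of $2$ in (4) is $2^{2k+\gamma-1}$, which fails for, e.g., $x^2+3y^2$ at the vertex $(1,1)$, where $f(1,1)=4$ forces a $*$ at level $2$ rather than the label $1$ the theorem predicts. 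That is a genuinely missing hypothesis, not a removable technicality.

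However, your Case 2 and Case 3 arguments still have a gap when $n\neq m$. In Case 2 with $n\neq m$, only one of $2^{n-\gamma}\alpha A_l^2$ and $2^{m-\gamma}\beta B_l^2$ has an odd coefficient, so the bracket is odd and the terminal valuation is $2k+\gamma-2$, not the claimed $2k+\gamma-1$; your sentence asserting that ``the two contributions of odd leading coefficient sum to an even number'' presupposes $n=m$. Dually, in Case 3, if the coordinate whose digit equals $1$ is the one attached to the larger of $n,m$, then both terms of the bracket are even (for instance $2x^2+y^2$ at $(1,0)$ gives $2A_l^2+B_l^2\equiv 2\pmod 4$ with $A_l$ odd, $B_l$ even, so the valuation is $2k-1$ rather than $2k-2$) and the claimed label is again wrong. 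These defects are inherited from the theorem statement itself, and the paper's proof has exactly the same holes, but a complete argument must either add hypotheses tying the pattern of $(i_{k-1},j_{k-1})$ to which of $n,m$ is minimal, or split the conclusions of Cases 2 and 3 into subcases according to the relative sizes of $n$ and $m$.
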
 
 Proof: We are given that \begin{center}
      $b_{k}$= $b_{k-1} +2^{k} i_{k}$ =($i_{k}i_{k-1}...i_1i_0)_2$\\ 
      $c_{k}$= $c_{k-1} +2^{k}j_{k} = (j_{k}j_{k-1}...j_1j_0)_2$ where\\ $i_{0},i_{1},...i_{k-1}$, $j_{0},j_{1}...j_{k-1} $ $\in$ \{0,1\}. \end{center}
       When $(i_{k-1}, j_{k-1})$ = (0,0) then consider \begin{center}
      $ab_{k}^2 + bc_{k}^2$  $ mod$ $2^{k+1}$
      $\equiv$  $ab_{k-1}^2 + bc_{k-1}^2$ $\mod$ $2^{k+1}$,  $k>0$\\ But $b_{k-1} = c_{k-1} = 0$
      and so,  all four children of node $v$ will be labelled by $*$.  
      \end{center}
     Let us consider
    \begin{center}
    $ab_{k+l-1}^2 + bc_{k+l-1}^2 \hspace*{.25em} mod \hspace*{.25em}2^{k+l}, 1\leq l \leq k$ \end{center}
     =$a( 2^{k-1}i_{k-1} +...+2^{k+l-1}i_{k+l-1})^2+b(2^{k-1}j_{k-1}+...+2^{k+l-1}j_{k+l-1})^2 \hspace*{.25em} mod \hspace*{.25em}2^{k+l}$\\ \begin{align}
      =   2^{2k+\gamma-2}(\alpha(i_{k-1}+2i_k+...+2^{l+1}i_{k+l-1})^2 + \beta(j_{k-1}+2j_k+...+2^{l+1}j_{k+l-1})^2) \hspace*{.5em}\mod \hspace*{.5em} 2^{k+l}
     \end{align}
    \\
     If $(i_{k-1},j_{k-1})$ = (1,1) then the least power of 2 in equation (4) is $2^{2k+\gamma-1}$. 
       so nodes descending from $v$ at $k+1, k+2,...,(2k+\gamma-1)$-th levels will be labelled by $*$ and at $(2k+\gamma$)-th level all nodes descending from $v$ will be labelled by $2k+\gamma-1$.\\
       If $(i_{k-1},j_{k-1})$ = (1,0) or (0,1) then the least power of 2 in equation (4) is $2^{2k+\gamma-2}$ so nodes descending from $v$ at $k+1, k+2,...,(2k+\gamma-2)$-th levels will be labelled by $*$ and at $(2k+\gamma-1)$-th level all nodes descending from $v$ will be labelled by $2k+\gamma-2$.      
   \section{The 2-adic valuation tree of the general polynomial \textbf{$ax^2+by^2+cxy+dx+ey+g$}}
   Let $f(x,y) = ax^2+by^2+cxy+dx+ey+g$,  for $a,b,c,d,g \mathbb{Z}.$The following theorem describes the valuation tree of $f(x,y)$:
   \begin{theorem}
   Let $v$ be a vertex at $k$-th level of the valuation tree of $f(x,y)$ labelled by $*$ for $k>1$. Then $v$ splits into four vertices such that either all are non-terminating or two of them are non-terminating.  For $k$=1,  the labelling of vertices depends upon the coefficients of $f(x,y)$. 
   \end{theorem}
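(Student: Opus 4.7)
The plan is to extend the lifting argument from Theorems 2.3 and 2.4 to the general conic $f(x,y) = ax^2 + by^2 + cxy + dx + ey + g$. Let $v$ be a $*$-vertex at level $k > 1$ with associated residues $(b_{k-1}, c_{k-1})$ modulo $2^k$, so that $f(b_{k-1}, c_{k-1}) \equiv 0 \pmod{2^k}$. Its four children at level $k+1$ correspond to the lifts $(b_k, c_k) = (b_{k-1} + 2^k i_k,\; c_{k-1} + 2^k j_k)$ for $(i_k, j_k) \in \{0,1\}^2$, and such a child is labelled $*$ precisely when $f(b_k, c_k) \equiv 0 \pmod{2^{k+1}}$.

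Substituting these lifts into $f$ and expanding, every cross-term carrying a factor of $2^{2k}$ is absorbed in $2^{k+1}\mathbb{Z}$ whenever $k \geq 1$. A direct computation therefore yields
\[
f(b_k, c_k) \;\equiv\; f(b_{k-1}, c_{k-1}) + 2^k\bigl(A\, i_k + B\, j_k\bigr) \pmod{2^{k+1}},
\]
where $A := c\, c_{k-1} + d$ and $B := c\, b_{k-1} + e$, each read modulo $2$. Writing $f(b_{k-1}, c_{k-1}) = 2^k \alpha$ with $\alpha \in \mathbb{Z}$ and dividing through by $2^k$, the selection condition for a $*$-child reduces to the single linear congruence
\[
A\, i_k + B\, j_k + \alpha \;\equiv\; 0 \pmod 2.
\]

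Counting $*$-children is then a matter of linear algebra over $\mathbb{F}_2$. If $(A, B) \not\equiv (0,0) \pmod 2$, the map $(i_k, j_k) \mapsto A i_k + B j_k$ is a surjection of $\mathbb{F}_2^2$ onto $\mathbb{F}_2$ with a one-dimensional kernel, so each value is attained exactly twice; hence $v$ splits into two $*$-children and two children labelled $k$. If instead $(A, B) \equiv (0,0) \pmod 2$ and $\alpha \equiv 0 \pmod 2$, every pair $(i_k, j_k)$ satisfies the congruence and all four children are $*$. Combining these, the splitting pattern is the "all four or two" dichotomy asserted in the statement.

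The principal obstacle is to exclude the complementary subcase $(A, B) \equiv (0, 0)$ with $\alpha \equiv 1 \pmod 2$, which would produce zero $*$-children. Observe that $A \equiv B \equiv 0 \pmod 2$ is precisely the condition that both partial derivatives $\partial_x f$ and $\partial_y f$ vanish at $(b_{k-1}, c_{k-1})$ modulo $2$, i.e., that $(b_{k-1}, c_{k-1})$ is a singular point of $f$ over $\mathbb{F}_2$. The hypothesis $k > 1$ should be used to argue that the lifting already performed at the previous level, combined with this singularity, forces an additional divisibility in $f(b_{k-1}, c_{k-1})$, making $\alpha$ automatically even. The case $k = 1$ is excluded from the uniform statement because the $*$-structure at the root depends directly on the residues of $a, b, c, d, e, g$ modulo $2$ and must be read off coefficient-by-coefficient.
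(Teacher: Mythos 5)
Your setup coincides with the paper's: the same lift $(b_k,c_k)=(b_{k-1}+2^k i_k,\; c_{k-1}+2^k j_k)$, the same reduction modulo $2^{k+1}$ to the single congruence $A\,i_k+B\,j_k+\alpha\equiv 0 \pmod 2$ with $A=c\,c_{k-1}+d$ and $B=c\,b_{k-1}+e$, and your count of solutions by linear algebra over $\mathbb{F}_2$ is a cleaner packaging of the paper's $25$-row case table. The problem is the final step. You correctly isolate the dangerous subcase $(A,B)\equiv(0,0)$ with $\alpha\equiv 1\pmod 2$, which yields zero $*$-children, but you do not prove it cannot occur --- you only write that the hypothesis $k>1$ ``should be used to argue'' that $\alpha$ is then forced to be even. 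No such argument exists, because the subcase genuinely occurs. Take $f(x,y)=x^2+y^2$, so $c=d=e=0$ and $A\equiv B\equiv 0$ identically, and consider the vertex at level $k=2$ with $(b_1,c_1)=(2,0)$: it is labelled $*$ since $f(2,0)=4\equiv 0\pmod 4$, yet $\alpha=f(2,0)/4=1$ is odd, the congruence $1\equiv 0\pmod 2$ has no solution, and indeed all four children $(2,0),(6,0),(2,4),(6,4)\bmod 8$ give $f\equiv 4\pmod 8$ and terminate with label $2$. This is visible in the paper's own tree in Example 2.1. So the asserted dichotomy ``all four or exactly two non-terminating'' fails; a third alternative, ``none non-terminating,'' must be admitted, or an extra hypothesis imposed that rules out $A\equiv B\equiv 0$ with $\alpha$ odd.

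For what it is worth, the paper's own proof stumbles at exactly the spot you flagged: it treats $\alpha\equiv 1$ by ``interchanging $*$ and $k$'' in Table 5.1, and after that interchange the rows with $c,d,e$ all even assign the label $k$ to all four children, contradicting the theorem being proved without comment. Your instinct about where the difficulty sits is therefore exactly right, but a proposal that defers the only nontrivial step to an unproved claim --- and one that is in fact false --- has a genuine gap rather than a complete proof.
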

   \begin{proof} We are given that $f(x,y)$ =  $ax^2 + by^2 +cxy + dx + ey$. Let $(b_{k-1},c_{k-1})$ is associated to the vertex $v$ at $k$-th level of the valuation tree.  so we have \begin{center}
   $f(b_{k-1},c_{k-1}) \equiv 0 \mod 2^k$, where
   \end{center} \begin{center}
    $b_{k}$= $(i_{k}i_{k-1}...i_1i_0)_2$.
      $c_{k}$= $(j_{k}j_{k-1}...j_1j_0)_2$, \end{center}Here $i_{0},i_{1},...i_{k}$, $j_{0},j_{1}...j_{k} $ $\in$ \{0,1\} and $(b_0,c_0) = (i_0,j_0)$.\\We want to find $(i_k,j_k)$ such that \begin{center}
      $f(b_k,c_k) \equiv 0 \mod 2^{k+1}$.
      \end{center} On putting the expression for $(b_k,c_k)$ in the above equation we get \begin{center}
      $a(b_{k-1} + 2^{k}i_{k})^2) + b(c_{k-1}+2^{k}j_{k})^2 + c(b_{k-1}+2^{k}i_{k})(c_{k-1}+2^{k}j_{k}) + d(b_{k-1}+2^{k}i_{k}) + e(c_{k-1}+2^{k}j_{k})+g \equiv 0 \mod 2^{k+1}, k>0$.\end{center}But we know that $f(b_{k-1},c_{k-1})$ $\equiv$ 0 $mod$ $2^{k}$ so  $f(b_{k-1},c{k-1})$ = $\alpha 2^{k}, \alpha \in \{0,1\}$.
      Hence we want to find $(i_k,j_k)$ such that \begin{center}$\alpha2^k+ 2^k[i_k(cc_{k-1}+d) + j_k(cb_{k-1}+e)] \equiv 0 \mod 2^{k+1}$,\end{center}That is\begin{center} $\alpha+ i_k(cj_0+d)+j_k(ci_0+e) \equiv 0 \mod 2$
      \end{center}The following Table 5.1 gives the all possible cases for $(i_k,j_k)$ when $\alpha\equiv 0 \mod 2$.

       \begin{longtable}{|c|c|c|c|c|c|c|}
      
      \hline
      Serial no.&($i_{k},j_{k}$) & $c$&$d$&$e$&$(i_0,j_0)$& Label \\
      \hline
      1&(0,0)& -&-&-&-&$*$\\ \hline
      2&(1,0)&odd&odd&-&(0,0),(1,0)&k\\ \hline
      3&(1,0)&odd&odd&-&(0,1),(1,1)&$*$\\ \hline
      4&(1,0)&odd&even&-&(0,0),(1,0)&$*$\\ \hline
      5&(1,0)&odd&even&-&(1,1),(0,1)&k\\ \hline
      6&(1,0)&even&odd&-&-&k\\ \hline
      7&(1,0)&even&even&-&-&$*$\\ \hline
      8&(0,1)&odd&-&odd&(0,0),(0,1)&k\\ \hline
      9&(0,1)&odd&-&odd&(1,0),(1,1)&$*$\\ \hline
      10&(0,1)&odd&-&even&(0,0),(1,0)&$*$\\ \hline
      11&(0,1)&odd&-&even&(1,1),(0,1) &k \\ \hline
      12&(0,1)&even&-&odd&-&k\\ \hline
      13&(0,1)&even&-&even&-&$*$\\ \hline
      14&(1,1)&odd&odd&even&(0,0),(1,1)&k\\ \hline
      15&(1,1)&odd&odd&even&(1,0),(0,1)&$*$\\ \hline
      16&(1,1)&odd&odd&odd&(0,0),(1,1)&$*$\\ \hline
      17&(1,1)&odd&odd&odd&(1,0),(0,1)&k\\ \hline
      18&(1,1)&odd&even&even&(0,0),(1,1)&$*$\\ \hline
      19&(1,1)&odd&even&even&(1,0),(0,1)&k\\ \hline
      20&(1,1)&odd&even&odd&(0,0),(1,1)&k\\ \hline
      21&(1,1)&odd&even&odd&(1,0),(0,1)&$*$\\ \hline
      22&(1,1)&even&odd&even&-&k\\ \hline
      23&(1,1)&even&odd&odd&-&$*$\\ \hline
      24&(1,1)&even&even&even&-&$*$\\ \hline
      25&(1,1)&even&even&odd&-&k\\ \hline
      
      \end{longtable} \begin{center}
      $ \textbf{Table 5.1}$
       \end{center}  \par Hence the theorem is proved for $\alpha\equiv 0 \mod2$.
      For the case $\alpha\equiv 1 \mod2$, we can find the appropriate label by interchanging the $*$ and $k$ in the last column of Table 5.1.  Hence the theorem is proved in this case as well. \end{proof}
      \section{One step towards the general polynomial}
      One step in the direction to find the valuation tree of general 2 variable polynomial with coefficients in $ \mathbb{Z}$ is the following theorem about the $2$-adic valuation tree of degree 3 polynomial $x^2y+5$.
      \begin{theorem}
       Let $v$ be a node labelled with $*$ at level $k$ of the valuation tree of $x^2y+5$ for $k\geq 1$.  Then $v$ splits into four vertices at level $k+1$. Exactly two of them are labelled with $*$ and two are labelled with $k.$ The root vertex splits into three vertices with label 0 and one vertex with label $*$.
\end{theorem}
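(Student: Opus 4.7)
My plan is to mimic the expansion technique already used in the proofs of Theorems 2.3 and 5.1. I begin with the root: evaluate $f(i_0,j_0)=i_0^2 j_0+5 \pmod{2}$ on each of the four pairs $(i_0,j_0)\in\{0,1\}^2$. Three of them give $5\equiv 1\pmod 2$ (label $0$) and only $(i_0,j_0)=(1,1)$ gives $6\equiv 0\pmod 2$ (label $*$), producing the stated root structure. A consequence that I will exploit below is that any $*$-vertex at level $k\geq 1$ descends from this unique $*$-vertex at level $1$, so the associated pair $(b_{k-1},c_{k-1})$ necessarily has $b_{k-1}$ odd.

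For the inductive step, fix such a $*$-vertex $v$ at level $k\geq 1$ and write $f(b_{k-1},c_{k-1})=\alpha\,2^k$ with $\alpha\in\{0,1\}$. I then expand
\[
f(b_k,c_k)=(b_{k-1}+2^k i_k)^2(c_{k-1}+2^k j_k)+5
\]
and reduce modulo $2^{k+1}$. The cross term $2\cdot b_{k-1}\cdot 2^k i_k\cdot c_{k-1}$ already carries a factor $2^{k+1}$, while the pure $i_k^2$ and $i_k^2 j_k$ contributions carry factors $2^{2k}$ and $2^{3k}$ respectively, each absorbed by $2^{k+1}$ whenever $k\geq 1$. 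What survives is
\[
f(b_k,c_k)\equiv b_{k-1}^2 c_{k-1}+5 + 2^k b_{k-1}^2 j_k \equiv \alpha\,2^k + 2^k j_k \pmod{2^{k+1}},
\]
where the last identification uses that $b_{k-1}$ is odd, hence $b_{k-1}^2\equiv 1\pmod 2$.

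Dividing by $2^k$, the condition $f(b_k,c_k)\equiv 0\pmod{2^{k+1}}$ reduces to the single linear congruence $j_k\equiv \alpha \pmod 2$, which is completely independent of $i_k$. Therefore exactly two of the four pairs $(i_k,j_k)\in\{0,1\}^2$ produce a $*$-child and the other two produce children whose valuation is exactly $k$, which is precisely the claim of the theorem.

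The only delicate point I anticipate is the boundary case $k=1$, where $2k=k+1$ and one must verify that the term $2^{2k}c_{k-1}i_k^2$ really is absorbed modulo $2^{k+1}$; this is immediate, since $2^{2k}=2^{k+1}$ divides itself, but it does mean that the key asymmetry between $x$ and $y$ in $x^2 y$ (namely that the $y$-perturbation contributes at order $2^k$ while the $x$-perturbation contributes only at order $2^{k+1}$) is already visible at the very first inductive step. Apart from this bookkeeping, the argument is a direct computation entirely parallel to the proof of Theorem 5.1.
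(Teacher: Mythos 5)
Your proposal is correct and follows essentially the same route as the paper's first proof: expand $f(b_{k-1}+2^k i_k,\,c_{k-1}+2^k j_k)$ modulo $2^{k+1}$, observe that all $i_k$-contributions are absorbed, and reduce to the single congruence $j_k\equiv \alpha \pmod 2$ using that $b_{k-1}$ is odd. Your version is in fact slightly cleaner, since you explicitly justify why the cross terms vanish and you correctly use $f(b_{k-1},c_{k-1})=\alpha 2^k$ where the paper's text has a typographical slip involving $(b_{k-2},c_{k-2})$.
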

\begin{tikzpicture}[nodes={draw, circle,fill=blue!20,}, -,level distance = 1cm]
 
   \node{}
    child { node {0} }
    child[missing]
    child { node{0}  }
    child[missing]
    child { node{0}  }
    child[missing]
    child { node{*}
    child{node{1}}
    child{node{1}}
    child{node{*}}
    child{node{*}}}  ;

\end{tikzpicture}\\

  \begin{proof}
  We are given that $f(x,y)$ =  $x^2y+5$. Let $(b_{k-1},c_{k-1})$ is associated to the vertex $v$ at $k$-th level of the valuation tree.  so we have \begin{center}
   $f(b_{k-1},c_{k-1}) \equiv 0 \mod 2^k$, where
   \end{center} \begin{center}
    $b_{k}$= $(i_{k}i_{k-1}...i_1i_0)_2$.
      $c_{k}$= $(j_{k}j_{k-1}...j_1j_0)_2$, \end{center}Here $i_{0},i_{1},...i_{k}$, $j_{0},j_{1}...j_{k} $ $\in$ \{0,1\} and $(b_0,c_0) = (i_0,j_0)=(1,1)$.\\We want to find $(i_k,j_k)$ such that \begin{center}
      $f(b_k,c_k) \equiv 0 \mod 2^{k+1}$.
      \end{center} On putting the expression for $(b_k,c_k)$ in the above equation we get 
    \begin{align}
     b_{k-1}^2c_{k-1}+2^kj_kb_{k-1}^2+5 \equiv 0 \mod 2^{k+1}.  
    \end{align}
    We are given that $f(b_{k-2},c_{k-2})$ $\equiv 0 \mod 2^{k-1}$ so \begin{center}
     $b_{k-2}^2c_{k-2} + 5$ = $2^{k-1}a$ ,    
\end{center}     where $a \in \{0,1\}$.
     Hence (5) becomes \begin{align} 
     2^{k}(b_{k-1}^2j_{k} + a ) \equiv 0 \mod 2^{k+1} 
     \end{align}
  Now observe that $b_{k-1}$ $\equiv  1 \mod 2$.  Hence (6) becomes \begin{center}
    $2^{k}(j_{k} + a )$ $\equiv 0 \mod 2^{k+1}$ or \\
     $j_{k} $ $\equiv a \mod 2$
  \end{center}
   Therefore there are two vertices labelled with $k$ descending from $v$ with $j_{k} $ $\not\equiv a \mod 2$  and other two vertices are non terminating labelled with $*$. \end{proof} 
   \par We can prove the theorem 6.1 by using the generalized  Hensel Lemma \cite{seven}. 
   \begin{proof}
   Let $f(x,y) = (x^2y + 5,  x + 1)$ and $ a = (1, 1) $, so\begin{center}
     $f(1,1) = (6,6)$ and $J_f(x,y) = \begin{array}{|cc|} 
                                       2xy & x^2\\
                                       1&0
                                       \end{array}$ = $-x^2$\\
   $||f(1,1)||_2 = \dfrac{1}{2} ,   |J_f(1, 1)|_2 = 1  $. Also $ ||f(1,1)||_2^2  <  |J_f(1, 1)|_2$
\end{center}                                  So by generalized Hensel Lemma \cite{seven} there is a unique solution to $f(x,y) = (0,0)$ in $\mathbb{Z}_2^2$ such that $||(x,y) - (1,1) ||_2$ $<$ 1.  The vector $\left(\begin{matrix}
 x\\y
 \end{matrix}\right)$ is the limit of sequence $\alpha_n$ =   $\left(\begin{matrix}
 x_n\\y_n \end{matrix}\right)$ where $\alpha_1$ =  $a$ =   $\left(\begin{matrix}
 1\\1\end{matrix}\right)$ and for n $\geq$ 1\begin{align}
  \alpha_{n+1} = \left(\begin{matrix} 
   x_n\\y_n \end{matrix}\right) - \begin{bmatrix}
   2x_ny_n & x_n^2\\
   1& 0
   \end{bmatrix}^{-1}\begin{bmatrix}
   x_n^2y_n + 5\\
   x_n + 1
   \end{bmatrix} \end{align}Let the pair ($b_{k-1},c_{k-1})$ is associated to vertex $v$. So\begin{center} $f(b_{k-1},c_{k-1}) \equiv 0 \mod 2^k $,\end{center}Also from above equation we will get $(b_{k-1}, c_{k-1})$ = (-1, -5) $\mod$ $2^k$\\
       Now using the above expression in (7) we get\\ \hspace*{2cm} $\alpha_{k+1} = (b_{k}, c_{k})$ = (-1, -5) $\mod$ $ 2^{k+1}$\\
       Hence $f(b_{k}, c_{k}) \equiv$ 0 $\mod$  $2^{k+1}$.\\ Similarly, on replacing x+1 by y+5 in $f(x,y)$ we will get\\\hspace*{2cm} $(b_{k}, c_{k})$ = (1, -5) $\mod$ $2^{k+1}$ and\\
       \hspace*{2cm} $f(b_{k}, c_{k}) \equiv$ 0 $\mod$  $2^{k+1}$\\
       Hence by definition we found two nodes of the valuation tree of $x^2y + 5$ labelled by $*$.   \end{proof}

\end{document}